\documentclass{amsart}

\usepackage{amsthm}
\usepackage{amscd}
\usepackage{amssymb}
\usepackage{mathrsfs}
\usepackage[xdvi]{graphics}
\usepackage{graphicx}
\usepackage{wrapfig}

\theoremstyle{plain}
\newtheorem{lem}{Lemma}
\newtheorem{thm}{Theorem}
\newtheorem{prop}{Proposition}

\theoremstyle{definition}
\newtheorem{defn}{Definition}

\newcommand{\N}{\mathbb{N}}
\newcommand{\Z}{\mathbb{Z}}

\newcommand{\C}{\mathbb{C}}

\newcommand{\K}{\mathscr{K}}

\newcommand{\T}{\mathscr{T}}
\renewcommand{\O}{\mathcal{O}}

\begin{document}

\begin{abstract} In \cite{BNW} we introduced the notion of a partial translation $C^*$-algebra for a discrete metric space. Here we demonstrate that several important classical $C^*$-algebras and extensions arise naturally by considering partial translation algebras associated with subspaces of trees.
\end{abstract}

\title{Partial translation algebras for trees}

\author{J. Brodzki}
\author{G.A. Niblo}
\author{N.J. Wright}

\maketitle

\section{Introduction}

The uniform Roe algebra $C^*_u(X)$ is a $C^*$-algebra associated with any discrete metric space $X$ which encodes analytically the coarse geometry of the space  \cite[Ch.~4]{R}. For example, the space $X$ has Yu's property $A$ \cite{Yu} if and only if the uniform Roe algebra of $X$ is nuclear \cite{STY}.  
A rich source of examples of interesting metric spaces is the class of discrete groups equipped with a left invariant metric. For such a group $G$, the uniform Roe algebra contains the (right) reduced $C^*$-algebra of the group $C^*_\rho(G)$. The uniform Roe algebra is vastly larger than $C^*_\rho(G)$, indeed, unless $G$ if finite, it is not separable. For this reason, it is useful for general metric spaces to consider an analogue of the reduced group $C^*$-algebra. 
In \cite{BNW} we introduced the notion of a partial translation algebra for a discrete metric space to play this role.  

In this paper we demonstrate the power of this approach by exhibiting several well known algebras in the new  framework.
In the context of subspaces of $\mathbb Z$ the partial translation algebra encodes the additive structure. We provide several examples of this phenomenon. In particular the Toeplitz extension arises here by considering the algebra associated to the inclusion of the natural numbers in the integers (Theorem \ref{Toeplitz}). Replacing the natural numbers by $\mathbb Z\setminus \{0\}$ we obtain a trivial extension of $C(S^1)$ by the compacts which is therefore inequivalent to the Toeplitz extension (Theorem \ref{Trivial}). It is also natural to ask what the associated algebra tells us about the additive structure of the primes, and here we make a connection with the classical de Polignac conjecture (Theorem \ref{Primes}). 

By generalising these ideas to consider the inclusion of the 3-valent tree and the rooted 3-valent tree into the Cayley graph of the free group on two generators we are able to recover the extension used by Cuntz in his computation in \cite{Cuntz1} of the $K$-theory of the Cuntz algebra $\mathcal {O}_2$ (Theorem \ref{Cuntz}). It is straightforward to generalise this method for the algebras $\mathcal{O}_n$, by considering the Cayley graph of the free group $F_n$ on $n$ generators, and we give an outline of this.

Finally we use the geometry to construct an explicit embedding of $C^*_\rho(F_2)$ into the Cuntz algebra $\O_2$ (Theorem \ref{CuntzEmbedding}). We do this by exhibiting explicit injective quasi-isometries of the regular 4-valent tree into the regular $3$-valent tree which are well behaved with respect to the natural partial translations on these trees.

\section{Partial translations}

Let $G$ be a discrete group equipped with a left invariant metric $d$. This means that for every element $l\in G$ the map on $G$ defined using the left multiplication by $l$ is an \emph{isometry} with respect to the metric $d$. 
On the other hand, the right multiplication by an element $r \in G$ moves every element $g\in G$ by the same amount: 
\[
d(g,gr) = d(e,r). 
\]
By analogy with metric geometry, we will call such maps \emph{translations}. These two actions together are responsible for the symmetries and the homogeneity of the group $G$ regarded as a metric space with respect to the left invariant  metric $d$. 

It is clear that one cannot hope to have the same amount of information for a general discrete metric space $X$. However, in \cite{BNW}, we introduced a way of measuring homogeneity. A starting point of our investigations was the observation that, up to a bounded amount of distortion, 
a metric subspace of a discrete group retains a degree of symmetry. Moreover, this induced structure can be codified.

\begin{wrapfigure}{r}{5cm}
\includegraphics[width=5cm]{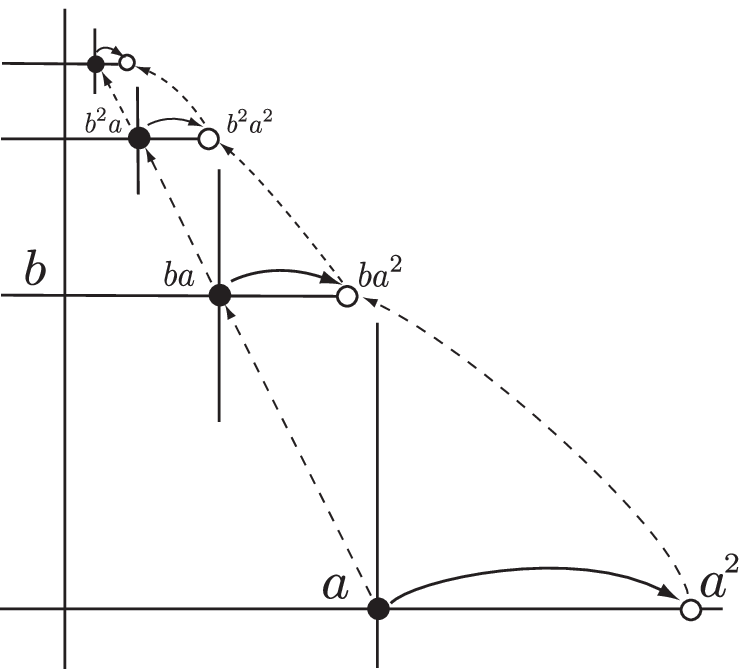}
\begin{center}
\begin{minipage}{5cm}{
\footnotesize  Solid arrows describe the partial translation $\cdot a$ acting on the subset $\{b^na \mid n\in \mathbb{Z}\}$ of the free group 
$\mathbb{F}_2$; dashed arrows denote left multiplication by $b\cdot$.}
\end{minipage}\end{center}
\vfill
\end{wrapfigure}

By a \emph{partial translation}  we mean a bijection $t$ defined on a subset $S\subseteq X$, taking values in a subset of $X$, such that 
$d(s, t(s))$ is bounded for all $s\in S$. This notion was introduced by Roe  \cite[Def.~10.21]{R} in his discussion of the 
coarse groupoid of Skandalis, Tu and Yu \cite{STY}. Equivalently, a partial translation may be described in terms of its graph; from that point of view it may be defined 
as a subspace of $X\times X$ which lies within a bounded distance of the diagonal and such that the coordinate projections are injective. 
In the case when $X$ is a discrete group, right multiplication by an element $g\in X$ determines 
a partial translation $t_g: y\mapsto yg$, which is defined for every $y\in X$.

\begin{defn} Let $X$ be a uniformly discrete bounded geometry space, and let $\bf T$ be a family of disjoint partial translations on $X$. Each of the partial translations $t\in \bf T$ induces a partial isometry $\tau$ on $\ell^2(X)$ defined by $\tau(\delta_x)=\delta_{t(x)}$ for $x$ in the domain of $t$, and $\tau(\delta_x)=0$ for $x$ not in the domain.

The \emph{partial translation 
algebra} $C^*(\bf T)$ is the $C^*$ sub-algebra of the uniform Roe algebra, $C^*_u(X)$, generated by the partial translations  in $\bf T$, regarded as partial isometries in $\ell^2(X)$ (see \cite[p. 67]{Roe} for a discussion of $C^*_u(X)$). 

Note that for any partial translation $t$ the inverse $t^{-1}$ is also defined as a partial translation on $X$ and that as an element of $\ell^2(X)$, it induces the adjoint $\tau^*$ of $\tau$.
\end{defn}

The notion of a partial translation algebra was introduced to play the role of the reduced group $C^*$-algebra for a discrete metric space, and in \cite[Thm 27]{BNW} we prove that in a countable discrete group there is a canonical family of partial translations  $\bf T$ such that  the algebra $C^*(\bf T)$ is isomorphic to $C^*_\rho(G)$.  

In general, without additional constraints on the partial translations, we do not expect to be able to recover (or use) geometric information. However, as we showed in \cite{BNW}, in many cases, and in particular in the case of a subspace of a discrete group, we can choose our partial translations to satisfy strong (partial) homogeneity conditions which control the structure of the corresponding partial translation algebras and relate this to the geometry of the space. The analytic properties of this algebra capture some interesting metric properties of the space $X$. One example of this relation is the statement that
when $X$ is sufficiently homogeneous  (i.e., in the language of \cite{BNW} it admits a free and globally controlled atlas for some partial translation structure), then the following statements are all equivalent \cite[Thm 29]{BNW}: 
\begin{enumerate}
\item The space $X$ has property $A$; 
\item The uniform Roe algebra $C^*_u(X)$ is nuclear; 
\item The algebra $C^*_u(X)$ is exact. 
\end{enumerate}
The conditions of this statement are satisfied, for example, when $X$ admits an injective uniform embedding into  a countable discrete group.

In this paper we will consider subspaces of trees, which of course embed in free groups and therefore inherit well behaved partial translations.

\section{Translation subspaces of $\Z$}




Coburn's theorem, \cite{C},  states that the Toeplitz algebra is the middle term of an extension:

$$0 \to \K\to \T \to C(S^1) \to 0,$$


\noindent where $\K$ denotes the compacts. This extension arises naturally by viewing the generator of the Toeplitz algebra as the unilateral shift on $\ell^2(\N)$ and  identifying the generator of $C(S^1)$ with the biliteral shift on $\ell^2(\Z)$ , induced by the identification of  $C(S^1)$ with the reduced $C^*$ algebra of $Z$. The point of introducing it here is that, as we shall see, it yields the first non-trivial example of a partial translation algebra, arising from the inclusion of $\N$ in $\Z$. 

\subsection{The translation algebra $C^*(\N)$}

In this section we establish the following:

\begin{thm}\label{Toeplitz}
The translation algebra $C^*(\N)$ arising from the inclusion of the natural numbers in the integers is isomorphic to the Toeplitz algebra, and moreover the inclusion induces the Toeplitz extension.
\end{thm}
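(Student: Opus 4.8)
The plan is to compute the partial translation algebra $C^*(\N)$ concretely and then identify it, together with its natural quotient, with the Toeplitz extension. The starting observation is that the relevant partial translation here is $t_1\colon n\mapsto n+1$, which on $\N=\{0,1,2,\dots\}$ is defined on all of $\N$ but has image $\{1,2,3,\dots\}$, i.e.\ it omits the point $0$. Its induced partial isometry $\tau$ on $\ell^2(\N)$ sends $\delta_n\mapsto\delta_{n+1}$, and this is exactly the unilateral shift. So the first step is simply to check that $\tau$ really is the unilateral shift, in particular that $\tau^*\tau=1$ while $\tau\tau^*=1-p_0$, where $p_0$ is the rank-one projection onto $\delta_0$.

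Given that, the second step is to recall Coburn's theorem in the form that the $C^*$-algebra generated by a single non-unitary isometry is the Toeplitz algebra $\T$, and is independent of the chosen isometry. Since $\tau$ is such an isometry, $C^*(\N)=C^*(\tau)\cong\T$, which gives the first assertion. I would present this as the easy half: the content is recognising the generator $\tau$ of $C^*(\N)$ as the shift and invoking Coburn.

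The more substantial claim is that \emph{the inclusion} $\N\hookrightarrow\Z$ \emph{induces the Toeplitz extension}, so I would make precise what map the inclusion induces. The inclusion of subspaces should give a $*$-homomorphism from $C^*(\N)$ towards the algebra modelling $\Z$, namely $C^*_\rho(\Z)\cong C(S^1)$, the latter generated by the bilateral shift $u$ on $\ell^2(\Z)$ with $\delta_n\mapsto\delta_{n+1}$. Concretely I would exhibit the quotient map $q\colon\T\to C(S^1)$ as the one sending $\tau\mapsto u$ (equivalently $z\in C(S^1)$), and verify that its kernel is exactly $\K$, the compact operators on $\ell^2(\N)$. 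The natural way to see this geometrically is that passing from $\N$ to $\Z$ "fills in" the missing point in the image of the shift: the defect projection $1-\tau\tau^*=p_0$ is compact, and more generally every commutator/defect arising from the partial translation on $\N$ versus the genuine translation on $\Z$ is supported near the boundary point $0$ and hence compact. Thus the ideal generated by these defects is $\K$, and the short exact sequence $0\to\K\to C^*(\N)\to C(S^1)\to 0$ is precisely the Toeplitz extension.

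The main obstacle I anticipate is not the algebra but the bookkeeping needed to show that the extension produced by the inclusion agrees \emph{on the nose} with Coburn's extension, rather than merely being abstractly isomorphic to some extension of $C(S^1)$ by $\K$ (extensions of $C(S^1)$ by $\K$ are classified by $\mathrm{Ext}(S^1)\cong\Z$, so one must pin down the class, not just the outer terms). I would handle this by identifying the generator $\tau$ of $C^*(\N)$ with the image of the generator of $\T$ under an explicit isomorphism and checking that the quotient map intertwines $\tau\mapsto z$ with the standard surjection $\T\to C(S^1)$; since a non-unitary isometry maps to a unitary of winding number one, the resulting class is the generator of $\mathrm{Ext}(S^1)$, which is the Toeplitz class. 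Making the identification of the subspace inclusion with this specific quotient map canonical and natural is where the care is required.
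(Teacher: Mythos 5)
Your proposal is correct, and it reaches the same two landmarks as the paper (the generator of $C^*(\N)$ is the unilateral shift; the quotient by $\K$ is $C^*_\rho(\Z)\cong C(S^1)$), but by a genuinely different route. Where you invoke Coburn's uniqueness theorem to get $C^*(\N)=C^*(\tau)\cong\T$, the paper instead writes down the explicit unitary $\ell^2(\N)\cong H_2$, $e_n\mapsto z^n$, under which $\tau_1$ becomes $T_z$, so no abstract uniqueness is needed. The bigger divergence is in how the extension is shown to be \emph{induced by the inclusion}. The paper builds the quotient map directly from the inclusion data: it reduces words to $\tau_1^i(\tau_1^*)^j$, shows each such word is a finite-rank perturbation of $\tau_1^n$ or $(\tau_1^*)^{|n|}$ with $n=i-j$, proves the powers of $\tau_1$ and $\tau_1^*$ are linearly independent modulo compacts, and concludes that $\tau_1^n\mapsto\sigma_1^n$, $(\tau_1^*)^n\mapsto(\sigma_1^*)^n$, $\K\mapsto 0$ is a well-defined $*$-homomorphism onto $C^*_\rho(\Z)$ with kernel $\K$; the Hardy-space unitary then matches this sequence with the Toeplitz extension \emph{on the nose}, as a unitary equivalence of extensions. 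You instead quote the classical symbol sequence $0\to\K\to\T\to C(S^1)\to 0$ and pin down the class in $\mathrm{Ext}(S^1)\cong\Z$ by the winding number/index of the lift of $z$. Your instinct that merely exhibiting some extension with the right outer terms is insufficient is exactly right, and the BDF/index argument is a legitimate way to settle it; what the paper's construction buys is that it avoids Ext and Voiculescu-type machinery entirely, stays self-contained, and makes the phrase ``induced by the inclusion'' literal -- the quotient map sends the operator of each restricted translation on $\N$ to the operator of the corresponding translation on $\Z$, which is the conceptual point of the partial-translation framework. What your route buys is brevity and a cleaner statement of exactly which equivalence class of extensions has been produced. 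One small step you assert without proof (the paper records it as a lemma) is that $\tau_1$ alone generates $C^*(\N)$, i.e.\ that every $\tau_{\pm n}$ is $\tau_1^n$ or $(\tau_1^*)^n$; this is immediate but should be said.
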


Regarding $\Z$ as an infinite cyclic group, it is equipped with a canonical family of partial translations inherited from the right action of the group on itself. The partial translation algebra of $\Z$ induced by this is, by definition,  the reduced group $C^*$-algebra $C^*_\rho(\Z)$. This is generated by a single element,  $\sigma_1$, the \emph{bilateral} shift on $\ell^2(\Z)$ induced by the partial translation (actually a translation) $n\mapsto n+1$.

The subspace $\N$ (which for our purposes will include 0) inherits a family of partial translations by restricting, and corestricting the translations of $\Z$ to $\N$. That is, the set of partial translations on $\N$ consists of all maps $t_n$ defined on $\N$ of the form $t_n(j)=j+n$ where $n\geq 0$, along with all maps of the form
$$t_n^{-1}\colon \{n,n+1,n+2,\dots\} \to \N, t\colon j\mapsto j-n,$$ where $n>0$. 

The corresponding partial translation algebra is by definition the $C^*$-algebra $C^*(\N)\subset B(\ell^2(\N))$ generated by the partial isometries $\tau_i$ and $\tau_{-i}$ corresponding to the partial translations   $s\colon \N \to \N\setminus\{0, \ldots, i-1\}$, $s(j)=j+i$ and  $s^{-1}\colon \N\setminus \{0, \ldots, i-1\} \to \N$, $s^{-1}(j)=j-1$.

\begin{lem} 
The partial translation algebra $C^*(\N)$ is generated by $\tau_1$ (and its adjoint) where $\tau_1$ acts on $\ell^2(\N)$ as a \emph{unilateral} shift. It contains the algebra of compact operators on $\ell^2(\N)$.
\end{lem}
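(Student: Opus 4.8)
The plan is to establish the two assertions separately, both resting on the semigroup structure of the generators. For the generation claim I would first record that $\tau_1$ is, by its very definition, the unilateral shift $\delta_j \mapsto \delta_{j+1}$, and in particular an isometry with full domain, so that its powers compose without loss: $\tau_1^i$ sends $\delta_j$ to $\delta_{j+i}$, which is exactly the partial isometry $\tau_i$ attached to $s(j)=j+i$. Dually $(\tau_1^*)^i = (\tau_1^i)^*$ sends $\delta_j$ to $\delta_{j-i}$ when $j \geq i$ and annihilates $\delta_0,\dots,\delta_{i-1}$, which is precisely $\tau_{-i}$. Since every generator of $C^*(\N)$ is thereby a power of $\tau_1$ or of $\tau_1^*$, the algebra they generate coincides with the $C^*$-algebra generated by the single element $\tau_1$.

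For the second assertion the key computation is the defect of the isometry $\tau_1$. As an isometry it satisfies $\tau_1^*\tau_1 = I$, whereas $\tau_1\tau_1^*$ fixes each $\delta_j$ with $j\geq 1$ and kills $\delta_0$; hence $I - \tau_1\tau_1^*$ is the rank-one projection $P_0$ onto $\C\delta_0$, and this operator lies in $C^*(\N)$. This is the crucial observation: the failure of $\tau_1$ to be unitary manufactures a nonzero finite-rank projection inside the algebra.

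From $P_0$ I would produce all the matrix units by conjugating with powers of the shift. A short check on basis vectors shows $\tau_1^i P_0 (\tau_1^*)^j = |\delta_i\rangle\langle\delta_j|$ for all $i,j\geq 0$: applying $(\tau_1^*)^j$, then letting $P_0$ select the $\delta_0$ component, then reinserting at level $i$ via $\tau_1^i$, yields the rank-one operator $e_{ij}$. Thus every $e_{ij}$ lies in $C^*(\N)$, and so do their finite linear combinations, the finite-rank operators. As $C^*(\N)$ is norm-closed and the finite-rank operators are dense in $\K(\ell^2(\N))$, the algebra contains all of $\K$.

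The steps form a chain of routine verifications, so I do not expect a genuine obstacle; the single point requiring care is the bookkeeping for composites of partial isometries, whose domains can shrink under composition. This is exactly where it matters that $\tau_1$ has full domain: it guarantees the identifications $\tau_i = \tau_1^i$ and $\tau_{-i} = (\tau_1^*)^i$ hold on the nose rather than only on a smaller subspace.
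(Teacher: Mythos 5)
Your proof is correct and follows essentially the same route as the paper's: identify the powers $\tau_1^i$ and $(\tau_1^*)^i$ with the generating partial isometries, extract the rank-one projection onto $\C\delta_0$ from the defect of the isometry (the paper writes it as $\tau_1^*\tau_1-\tau_1\tau_1^*$, which equals your $I-\tau_1\tau_1^*$ since $\tau_1^*\tau_1=I$), and conjugate by powers of the shift to obtain all matrix units, hence all compacts by density.
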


\begin{proof}
It is clear that for each $n$ the operator $\tau_1^n$ is induced by the partial translation $t_n$, while $(\tau_1^*)^n$ is induced by $t_n^{-1}$ proving the first part of the lemma. The operator $\tau_1^*\tau_1-\tau_1\tau_1^*$ is the projection onto the subspace spanned by $0$, and conjugating by the operators $\tau_i^n$ we obtain all the matrix elements, so $C^*(\N)$ contains the algebra of compact operators. 
\end{proof}


Since one can cancel pairs $\tau_1^*\tau_1$, a general element of $C^*(\N)$ of the form 
$$\tau_{1}^{i_1}(\tau_{1}^*)^{j_1}\tau_{1}^{i_2}(\tau_{1}^*)^{j_2}\dots \tau_{1}^{i_k}(\tau_{1}^*)^{j_k}$$
 can be reduced to $\tau_{1}^i(\tau_{1}^*)^j$.


Suppose that $n=i-j$ is positive. Then it is easy to see that $t_it_j^{-1}$ is a subtranslation of $t_n$, that is, its domain is a subset of the domain of $t_n$ and where both are defined they are equal. Moreover the domains differ only by a finite set. Hence as operators $\tau_1^i(\tau_1^*)^j$ and $\tau_1^n$ differ by a finite rank operator. Similarly if $i-j=n$ is negative then $t_it_j^{-1}$ is a subtranslation of $t_n$, and the operators  $\tau_1^i(\tau_1^*)^j$ and $(\tau_1^*)^{n}$ again differ by a finite rank operator.

Thus elements of the form $\tau_1^n$ and $(\tau_1^*)^n$ along with finite rank operators span a dense subspace of $C^*(\N)$. It is easy to see that $\tau_1^i$ and $(\tau_1^*)^j$ never differ by a compact operator (there is no cancellation between them) while $\tau_1^i$ and $\tau_1^j$ differ by a compact operator only if $i=j$. Hence the map $\tau_1^n\mapsto \sigma_1^n$, $(\tau_1^*)^l\mapsto (\sigma_{1}^*)^n$, and $k\mapsto 0$ for every  compact operator $k$, extends to a well defined linear map from a dense subspace of $C^*(\N)$ to $C^*_\rho(\Z)$. This map is moreover a *-algebra homomorphism, and extends by continuity to a *-homomorphism from $C^*(\N)$ to $C^*_\rho(\Z)$ with kernel consisting of compact operators, giving us an extension
$$0\to \K(\ell^2(\N))\to C^*(\N) \to C^*_\rho(\Z) \to 0.$$


We now make the following identifications. The Hilbert space $\ell^2(\N)$ is naturally identified with the Hardy space $H_2$, by taking $e_n$ to $z^n$ for $n\in \N$. Similarly $\ell^2(\Z)$ is naturally identified with $L^2(S^1)$, again the map takes $e_n$ to $z^n$ (however this is now for all $n\in\Z$). With these identifications $\tau_1$ is identified with $T_z$, the Toeplitz operator associated with the identity map   $z: S^1\to S^1$, while the generator $\sigma_1$ of $C^*_\rho(\Z)$ is identified with $M_z$, the operator of pointwise multiplication by the function $z$. Since $C^*(\N)$ and $\T$ are generated by $\tau_1$ and $T_z$ respectively, the above identification of Hilbert spaces gives an isomorphism $C^*(\N)\cong \T$. Similarly we have $C^*_\rho(\Z)\cong C(S^1)$.

The isomorphisms $C^*(\N)\cong \T$ and $C^*_\rho\Z\cong C(S^1)$ identify this with the Toeplitz extension,

$$0 \to \K(H_2) \to \T \to C(S^1) \to 0.$$

\subsection{The translation algebra $C^*(\Z\setminus\{0\})$}

From now on we will abuse notation, denoting both a partial translation and the operator that it defines with the same symbol, using context to determine the meaning. Hence if $s$ is a partial translation we may write $s^*$ to denote the adjoint of the operator corresponding to $s$.

We next consider the effect of removing a single point from the group $\Z$. 

\begin{thm}\label{Trivial} The partial translation algebra $C^*(\Z\setminus\{0\})$ is  a trivial extension of $C(S^1)$ by the compact operators which is therefore not equivalent (in the sense of $K$-homology) to the Toeplitz extension. 
\end{thm}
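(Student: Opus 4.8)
The plan is to exploit the fact that deleting the origin disconnects $\Z\setminus\{0\}$ into the two rays $\{1,2,3,\dots\}$ and $\{-1,-2,-3,\dots\}$, and to track what the restricted partial translation $\tau_1$ does on each. First I would fix the orthogonal decomposition $\ell^2(\Z\setminus\{0\})=H_+\oplus H_-$ into the $\ell^2$ spaces of the two rays and observe that $\tau_1$, the operator induced by $j\mapsto j+1$, acts as $S_+\oplus S_-^{*}$, where $S_+$ is the forward unilateral shift on $H_+$ and $S_-^{*}$ is the backward shift on $H_-$ (the point $-1$ maps to the deleted origin, so it lies in the kernel). Both $S_+S_+^{*}$ and $S_-^{*}S_-$ differ from the identity by a rank-one projection, so $\tau_1$ is essentially unitary.

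Next I would establish the extension itself. The commutator $\tau_1^{*}\tau_1-\tau_1\tau_1^{*}=P_{\delta_{1}}-P_{\delta_{-1}}$ is finite rank, and conjugating by powers of $\tau_1,\tau_1^{*}$ produces every diagonal matrix unit $P_{\delta_n}$; the partial translation induced by $j\mapsto j+2$ supplies the single cross term $\delta_1\otimes\delta_{-1}^{*}$ linking the two rays (it differs from $\tau_1^2$ by exactly this rank-one operator), and multiplying by the diagonal units generates all matrix units. Hence $\K(\ell^2(\Z\setminus\{0\}))\subseteq C^*(\Z\setminus\{0\})$. Modulo the compacts every cross term vanishes and $\bar\tau_1$ is a unitary whose spectrum is the essential spectrum $S^1$ of $S_+\oplus S_-^{*}$; since $C(S^1)$ is the universal $C^*$-algebra on one unitary, the quotient is $C(S^1)$ and I obtain $0\to\K\to C^*(\Z\setminus\{0\})\to C(S^1)\to 0$.

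The crux is showing this extension splits. Here the geometry does the work: the Fredholm index of $\tau_1$ is $\operatorname{ind}(S_+)+\operatorname{ind}(S_-^{*})=-1+1=0$, the deletion of the origin having introduced a backward shift whose index cancels that of the forward shift. Concretely I would glue the two rays at the origin by setting $U=\tau_1+\delta_1\otimes\delta_{-1}^{*}$, i.e. redirecting the exit point $\delta_{-1}$ of the negative ray into the entry point $\delta_1$ of the positive ray. This $U$ permutes the standard basis as the bilateral shift on the reindexed bi-infinite line $\dots,\delta_{-2},\delta_{-1},\delta_1,\delta_2,\dots$, so it is unitary with spectrum $S^1$ and $C^*(U)\cong C(S^1)$; moreover $U-\tau_1$ is rank one, hence $U\in C^*(\Z\setminus\{0\})$ and $U\equiv\tau_1$ modulo $\K$. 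The assignment $\sigma_1\mapsto U$ is therefore a $*$-homomorphism $C(S^1)\to C^*(\Z\setminus\{0\})$ splitting the quotient map, so the extension is trivial.

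Finally, to see that this makes it inequivalent to the Toeplitz extension of Theorem \ref{Toeplitz}, I would invoke the BDF classification $\mathrm{Ext}(C(S^1))\cong\Z$, under which the class of an essentially unitary extension is detected by the Fredholm index of a lift of the unitary generator, trivial extensions mapping to $0$. Our $\tau_1$ has index $0$ (witnessed by the unitary lift $U$), whereas the Toeplitz operator satisfies $\operatorname{ind}(T_z)=-1\neq 0$; as these are distinct in $\Z$ the two extensions represent different $K$-homology classes. I expect the main obstacle to be the splitting step — specifically, checking that the natural gluing unitary $U$ both lies in the algebra and reduces to $\bar\tau_1$ modulo the compacts — rather than the index bookkeeping, which is forced once the two-ray picture is in place.
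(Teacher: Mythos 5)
Your proposal is correct and takes essentially the same route as the paper: your gluing unitary $U=\tau_1+\delta_1\otimes\delta_{-1}^*$ is exactly the paper's globally defined translation $t$ (defined by $t(j)=j+1$ for $j\neq -1$ and $t(-1)=1$), and your reindexing of the basis is exactly the paper's bijective coarse equivalence $\phi\colon\Z\setminus\{0\}\to\Z$, so the splitting is the same one. The remaining differences are cosmetic — you identify the quotient with $C(S^1)$ via the essential spectrum and universal unitary rather than through $C^*_\rho(\Z)$, and you certify inequivalence with the Toeplitz extension by the BDF index rather than by noting that the cycle $(\ell^2(\Z),\rho,1)$ is degenerate — apart from one harmless slip: it is $S_-S_-^*$, not $S_-^*S_-$, that differs from the identity by a rank-one projection (your later computation $\tau_1^*\tau_1-\tau_1\tau_1^*=P_{\delta_1}-P_{\delta_{-1}}$ is the correct one).
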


\begin{proof}
Let $X=\Z\setminus\{0\}$. The partial translations on $X$ that we obtain by restricting and corestricting the translations of $\Z$ have the form
$$s_n\colon \Z\setminus \{-n,0\} \to \Z\setminus \{0,n\},\quad s_n \colon j\mapsto j+n.$$
Note that $s_0$ is the identity and $s_{-n}=s_n^*$ for all $n\in\Z$.

It appears \emph{a priori} that we need all of these partial translations to generate the algebra $C^*(\Z\setminus\{0\})$, since it is not true in this case that $s_n=(s_1)^n$. In fact we will see that it suffices to have $s_0=1,s_1$ and $s_2$.

Consider $s_1s_1^*$. This acts as the identity on $i$ for all $i\neq 1$, while it is undefined at 1. Thus, as an element of the algebra, $s_1s_1^*=1-p_1$ where $p_1$ denotes the rank 1 projection onto the basis element $e_1$ in $\ell^2(X)$. Hence the algebra $C^*(X)$ contains the rank 1 projection $p_1$. Now for any $i,j\in X$, the matrix element $e_{i,j}$ is given by $s_{i-1} p_1 (s_{j-1}^*)$, hence the algebra contains all matrix elements, and hence all compact operators.

Now consider compositions of the form $s_{i_1}s_{i_2}\dots s_{i_k}$. Where this is defined it translates by $l=i_1+\dots+i_k$. In other words it is a subtranslation of $s_l$. The domain on which this is defined is
$$\Z\setminus\{0,-i_k,-(i_{k-1}+i_k), \dots, -l\}$$
in particular it differs from the domain of $s_l$ by only finitely many points. As before $s_i-s_j$ is compact only if $i=j$ and hence we deduce that we have an extension of the form
$$0\to \K(\ell^2(X))\to C^*(X)\to C^*_\rho(\Z)\to 0,$$
where the map $C^*(X)\to C^*_\rho(\Z)$ is given by extending linearly the map taking $s_l$ to $[l]$ and vanishing on the compacts.

We can identify the algebra $C^*(X)$ more explicitly as follows. Consider the partial translation $s_1$. This takes $\Z\setminus\{-1,0\}$ to $\Z\setminus\{0,1\}$. We can extend it to a globally defined translation $t$ on $X$ by defining $t(j)=s_1(j)$ for $j\in X, j\neq -1$ and $t(-1)=1$. Note that this is a compact perturbation of $s_1$ and hence lies in the algebra. Moreover, $t^n$ is a compact perturbation of $s_n$ for all $n$, thus $C^*(X)$ is generated by $t$ along with all compact operators.

Now consider the algebra generated by $t$ alone. Let $\phi\colon X \to \Z$ be the bijective coarse equivalence defined by $\phi(j)=j$ for $j>0$ and $\phi(j)=j+1$ for $j<0$. If $U$ denotes the corresponding unitary from $\ell^2(X)$ to $\ell^2(\Z)$ then $UtU^*$ is right translation by 1, i.e.\ it is the element $[1]$ in $C^*_\rho(\Z)$. Hence using $U$ to identify these two Hilbert spaces, the algebra $C^*(t)$ is identified with $C^*_\rho(\Z)$, the algebra of compacts $\K(\ell^2(X))$ is identified with $\K(\ell^2(\Z))$ and hence (since these together generate $C^*(X)$ we deduce that $C^*(X)$ is identified with the sum $C^*_\rho(\Z)+\K(\ell^2(\Z))$. This identifies the extension as
$$0\to \K(\ell^2(\Z))\to C^*_\rho(\Z)+\K(\ell^2(\Z))\to C^*_\rho(\Z)\to 0.$$

As a $K$-homology cycle for $C^*_\rho(\Z)$ this extension is $(\ell^2(\Z),\rho,1)$ where $\rho$ denotes the right regular representation. This cycle is degenerate, hence it is a trivial element in $K$-homology. However the K-homology class represented by the Toeplitz extension is non-trivial, thus the two extensions are not equivalent.
\end{proof}

\subsection{Coarsely disconnected subspaces}

We will now consider subspaces of $\Z$ such as $\{i^2 : i \in \N\}$, having arbitrarily large gaps. These are said to be coarsely disconnected. 

\begin{prop} Let $X$ be a subset of $\Z$ which is coarsely equivalent to $\{i^2 : i \in \N\}$. Then $C^*(X)$ is isomorphic to the unitised compacts $\widetilde{\K}(l^2(X))$.
\end{prop}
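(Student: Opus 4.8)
The plan is to establish the two inclusions $C^*(X)\subseteq\widetilde{\K}(\ell^2(X))$ and $\K(\ell^2(X))\subseteq C^*(X)$ separately; since the partial translation $s_0$ is the identity, this forces $C^*(X)=\C 1+\K=\widetilde{\K}$. The one geometric input I would extract from the hypothesis is that, because $X$ is coarsely disconnected, its gaps grow without bound, so that for every $n\neq 0$ the set $D_n=\{x\in X:x+n\in X\}$ is finite; equivalently, for each $R$ only finitely many pairs of distinct points of $X$ lie within distance $R$. This is the feature that distinguishes $X$ from a set like $\Z$, and it is precisely the property the squares enjoy, since for fixed $n$ the equation $x^2+n=y^2$ has only finitely many solutions.

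For the first inclusion I note that the generators of $C^*(X)$ are the operators $s_n$, with $s_0=1$ and $s_n\colon e_x\mapsto e_{x+n}$ when $x+n\in X$ (and $0$ otherwise). For $n\neq 0$ the operator $s_n$ is supported on the finite set $D_n$, hence is of finite rank and in particular compact. Since $\widetilde{\K}=\C 1+\K$ is a closed $*$-subalgebra of $B(\ell^2(X))$ containing all the generators, it contains $C^*(X)$.

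For the reverse inclusion I would show that $C^*(X)$ acts irreducibly on $\ell^2(X)$ and contains a nonzero compact operator; the standard fact that an irreducibly acting $C^*$-algebra meeting $\K$ nontrivially must contain all of $\K$ then finishes the argument, since any $s_n$ with $n\neq 0$ is a nonzero finite rank element. Irreducibility I would prove by computing the commutant. The decisive observation is that the additive structure gives, for any two points $x,y\in X$, the identity $s_{y-x}e_x=e_y$, so every basis vector is reachable from every other. Concretely, if $T$ commutes with all $s_n$ then comparing matrix entries yields, for all $x,y\in X$ and $n\in\Z$, the relation $[x+n\in X]\,T_{y,x+n}=[y-n\in X]\,T_{y-n,x}$. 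Given $u\neq v$ in $X$, I can choose $u'\in X$ with $u'+(v-u)\notin X$, which is possible since $D_{v-u}$ is finite while $X$ is infinite, and taking $n=u-u'$ the relation forces $T_{v,u}=0$; thus $T$ is diagonal. Feeding $n=u-v$ back in shows the diagonal entries are all equal, so $T\in\C 1$ and $C^*(X)$ is irreducible.

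The main obstacle, and the place I would spend the most care, is the geometric step: making precise the sense in which $X$ is \emph{coarsely disconnected} so that the finiteness of each $D_n$ genuinely follows. This matters because a careless reading of ``coarsely equivalent to the squares'' is too weak, since a coarse equivalence is allowed to collapse pairs of nearby points, and for a set such as $\{i^2,i^2+1:i\in\N\}$ the operator $s_1$ would have infinite rank and $C^*(X)$ would fail to embed in $\widetilde{\K}$. So the hypothesis must be read, as the term ``coarsely disconnected'' intends, as the statement that for every $R$ only finitely many pairs of distinct points lie within distance $R$. Granting this, the remaining technical heart is the commutant computation above, where the interplay of the additive identity $s_{y-x}e_x=e_y$ with the finiteness of the $D_n$ is exactly what separates this case from $X=\Z$, for which $C^*(X)=C^*_\rho(\Z)$ is abelian and very far from acting irreducibly.
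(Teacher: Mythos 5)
Your proposal is correct, and its first half is exactly the paper's: both arguments reduce the hypothesis to the statement that the gaps of $X$ grow without bound, so that each domain $D_n$ ($n\neq 0$) is finite, each $s_n$ is finite rank, and hence $C^*(X)\subseteq \C 1+\K(\ell^2(X))$. Where you genuinely diverge is the reverse inclusion $\K(\ell^2(X))\subseteq C^*(X)$. The paper gets this by exhibiting rank-one operators explicitly: taking $x$ and $y$ to be the least and greatest elements of a non-empty domain of $t_n$, it asserts $y-x=mn$ and that $(t_n)^m$ is the rank-one operator $e_x\mapsto e_y$, then conjugates by translations ($t_{b-y}(t_n)^m t_{x-a}$) to produce all matrix units. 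You instead compute the commutant of $\{s_n: n\in\Z\}$ --- your relation $[x+n\in X]\,T_{y,x+n}=[y-n\in X]\,T_{y-n,x}$ and both specializations check out, and the choice of $u'\in X\setminus D_{v-u}$ pits the finiteness of $D_{v-u}$ against the infiniteness of $X$ exactly as needed --- and then you invoke the standard theorem that an irreducibly acting $C^*$-algebra containing a nonzero compact operator contains all of $\K$. As for what each buys: the paper's route is elementary and hands over the matrix units explicitly, which is what makes the quotient map to $\C$ and the extension recorded at the end of its proof transparent; but its key rank-one claim needs repair as written, since $y-x$ need not be a multiple of $n$, and even when it is, $(t_n)^m$ is defined at $x$ only if every intermediate point $x+jn$ lies in $X$ (one can fix this, e.g.\ by intersecting the finite-rank domain projections $s_n^*s_n$ to isolate rank-one projections and then translating). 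Your route trades that combinatorial delicacy for a citation of a nontrivial classical theorem and is robust against precisely this kind of slip. Finally, your caution about the reading of ``coarsely equivalent'' is warranted and consistent with the paper: the paper opens its proof by asserting, without proof, the gap characterization (gaps tending to infinity on each infinite side), which is valid only for the bijective notion of coarse equivalence natural to partial translation structures; under the weaker purely metric notion your example $\{i^2,\,i^2+1 : i\in\N\}$ does indeed falsify the statement, so reading the hypothesis as you do is the right (and intended) interpretation.
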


\begin{proof}
Subspaces of $\Z$ which are coarsely equivalent to $\{i^2 : i \in \N\}$ can be characterised as follows. Let $X_+$ denote the non-negative part of $X$ and let $X_-$ denote the negative part of $X$. Then $X_+$ is either finite or consists of an increasing sequence $a_i$ of points with $a_{i+1}-a_i$ tending to infinity as $i\to\infty$. Similarly $X_-$ is either finite or consists of an decreasing sequence $b_i$ of points with $b_i-b_{i+1}$ tending to infinity as $i\to\infty$, and $X_+,X_-$ cannot both be finite.

Now fix some $n\neq 0$, and consider $t_n$ the translation by $n$ on $X$. The domain of $t_n$ consists of those $x\in X$ such that $x+n$ lies in $X$. Since if $X_-$ is infinite, the gaps $b_n-b_{n+1}$ tend to infinity, it follows that if the domain is non-empty then there is a least such $x$. Similarly there is a greatest such $x$, and hence the domain of $t_n$ is finite. Thus as an operator on $l^2(X)$ it follows that $t_n$ is compact. For $n=0$ the translation by $n$ is the identity on $X$, and hence we deduce that $C^*(X)$ is a subalgebra of the unitisation of the compact operators $\widetilde{\K}(l^2(X))$.

We will now show that $C^*(X)$ contains all matrix elements. Pick some $n>0$ for which the domain of $t_n$ is non-empty, let $x$ be the least element of the domain., and let $y$ be the greatest element. Then $y-x=mn$ for some $m>0$, and the translation $(t_n)^m$ takes $x$ to $y$ and is undefined otherwise. Hence as an operator $(t_n)^m$ is the rank 1 operator taking $e_x$ to $e_y$. Now, for any $a,b$ in $X$, the composition $t_{b-y}(t_n)^mt_{x-a}$ is the rank 1 operator taking $e_a$ to $e_b$. The closed span of these operators is $\K(l^2(X))$, hence we conclude that $C^*(X) = \widetilde{\K}(l^2(X))$.

As in the previous examples there is an extension: in this case we see that taking the quotient by the compact operators we obtain a map to $\C$,  since only the identity on $X$ has infinite support, and the quotient here can be regarded as the group $C^*$-algebra of the trivial group. Thus we have the extension
$$0 \to \K(l^2(X)) \to\widetilde{\K}(l^2(X)) \to C^*_\rho(\{0\}) \to 0.$$
\end{proof}

\bigskip

\subsection{The translation algebra of the primes}

The primes inherit a partial translation algebra $C^*(P)$ from the integers and for each positive integer $n$ there is specific element $t_n\in C^*(P)$ represented by translation by $n$. Since there is only one even prime the element $t_1^*t_1$ is a rank 1 projection, and since the translations act transitively the partial translation algebra contains every compact operator. Clearly the element $t_0$ is the identity, so the partial translation algebra of the primes actually contains the unitised algebra
$\widetilde{\K}(l^2(P))$ which is an extension of the form:

$$0 \to \K(l^2(X)) \to\widetilde{\K}(l^2(X)) \to C^*_\rho(\{0\}) \to 0.$$

Now the twin prime conjecture is equivalent to the statement that the operator $t_2$ is not compact. Indeed de Polignac's generalisation of the twin primes conjecture, which asserts the existence of infinitely many prime pairs separated by distance $n$ for each even $n$,  (\cite{dP}) is equivalent to the statement that $t_n$ is not compact for any even $n$. Note that if  de Polignac's conjecture holds for some even $n$ then the algebra $C^*(P)$ is strictly larger than $\widetilde{\K}(l^2(X))$. 

The algebra $\widetilde{\K}(l^2(X))$ has a unique ideal, namely  $ \K(l^2(X))$. If $C^*(P)$  is isomorphic to $\widetilde{\K}(l^2(X))$ it also must contain a unique ideal and this must be $ \K(l^2(X))$ since this is an ideal in $C^*(P)$. It follows that  taking the quotient by this ideal we obtain in both cases a copy of $\C$. Hence $C^*(P)=\widetilde{\K}(l^2(X))$.

\begin{thm}\label{Primes}
The algebra $C^*(P)$ is not isomorphic to $\widetilde{\K}(l^2(X))$ if and only if de Polignac's conjecture holds for some even $n$, if and only if the quotient of $C^*(P)$ by the compact operators is strictly larger than $\C$.
\end{thm}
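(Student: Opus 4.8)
The plan is to organise the asserted three-way equivalence around its middle term, the statement that $C^*(P)/\K$ is strictly larger than $\C$, and to check each of the other two conditions against it. Writing $t_n$ for the translation by $n$ on $P$, I will freely use the two facts already established above: that $\widetilde{\K}(l^2(P))$ is contained in $C^*(P)$, and that an isomorphism $C^*(P)\cong\widetilde{\K}(l^2(P))$ forces the literal equality $C^*(P)=\widetilde{\K}(l^2(P))$. Since $\widetilde{\K}=\K+\C\cdot 1$ and $\widetilde{\K}/\K\cong\C$, the inclusion $\widetilde{\K}\subseteq C^*(P)$ shows that the quotient $C^*(P)/\K$ strictly contains $\C$ precisely when $C^*(P)$ strictly contains $\widetilde{\K}$; this reformulation is the form in which I will check the condition.

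First I would dispose of the equivalence with the non-isomorphism statement, which is essentially formal. If $C^*(P)/\K=\C$ then every element of $C^*(P)$ is a scalar plus a compact, so $C^*(P)=\widetilde{\K}$ and in particular the two algebras are isomorphic. Conversely, if $C^*(P)\cong\widetilde{\K}$ then the unique-ideal argument recorded above gives $C^*(P)=\widetilde{\K}$, whence $C^*(P)/\K=\widetilde{\K}/\K=\C$. Thus the conditions ``$C^*(P)$ is not isomorphic to $\widetilde{\K}$'' and ``$C^*(P)/\K$ is strictly larger than $\C$'' are contrapositive reformulations of one another.

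The content lies in the equivalence with de Polignac's conjecture, which I would prove in both directions. The forward direction is the observation already noted: if the conjecture holds for some even $n$ then $t_n$ has infinite domain and so is non-compact, and since $t_n$ translates by $n\neq 0$ its matrix has zero diagonal, so membership in $\widetilde{\K}$ would force $t_n$ to be compact. Hence $t_n\in C^*(P)\setminus\widetilde{\K}$ and $C^*(P)\supsetneq\widetilde{\K}$. For the reverse direction I would argue by contraposition: assume de Polignac fails for every even $n$, so each even translation $t_n$ has finite domain and is therefore compact. The supplementary point is that every odd translation is automatically compact, for if $n$ is odd then $p$ and $p+n$ have opposite parities, one of the pair must be the unique even prime $2$, and so the domain of $t_n$ is contained in $\{2\}$ and $t_n$ has rank at most one. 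Consequently every $t_n$ with $n\neq 0$ is compact while $t_0=1$; as $C^*(P)$ is generated by the translations, closure under the $*$-algebra operations and limits keeps it inside $\C\cdot 1+\K=\widetilde{\K}$, and with the reverse inclusion we obtain $C^*(P)=\widetilde{\K}$ and $C^*(P)/\K=\C$.

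The step I expect to carry the weight is this last implication, where one must control all the generators simultaneously rather than a single translation. The elementary number-theoretic input that odd translations are compact is exactly what is needed to pass from the hypothesis on even $n$ to a statement about every translation: only once all $t_n$ with $n\neq 0$ are known to be compact can one conclude that the algebra they generate cannot escape $\widetilde{\K}$. Assembling the three implications then closes the equivalences into the stated chain.
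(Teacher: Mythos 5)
Your proof is correct and rests on the same ingredients as the paper's discussion --- the containment $\widetilde{\K}(l^2(P))\subseteq C^*(P)$, the dictionary ``$t_n$ compact $\Leftrightarrow$ only finitely many prime pairs at distance $n$'', and the unique-ideal argument forcing any abstract isomorphism $C^*(P)\cong\widetilde{\K}(l^2(P))$ to be literal equality --- but it is genuinely more complete than what the paper records. The paper writes down only one direction of the de Polignac equivalence (conjecture holds for some even $n$ $\Rightarrow$ $C^*(P)\supsetneq\widetilde{\K}$) together with ``isomorphic $\Rightarrow$ equal''; it never proves that failure of de Polignac for \emph{every} even $n$ forces $C^*(P)=\widetilde{\K}$, which is exactly what the ``only if'' in the theorem requires. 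Your contrapositive argument supplies this missing step, and its crux is the parity lemma: an odd translation is automatically compact, because a prime pair at odd distance must contain the unique even prime $2$, so its domain is at most a single point. The paper invokes only the $n=1$ instance of this observation (to produce the rank-one projection $t_1^*t_1$) and leaves the general case, and with it the whole reverse implication, unstated. You also make explicit the small but necessary point that a non-compact $t_n$ lies outside $\widetilde{\K}$ and not merely outside $\K$ (the zero diagonal kills any scalar part), which the paper glosses over. One quibble: for \emph{negative} odd $n$ the domain of $t_n$ is contained in $\{2-n\}$ rather than $\{2\}$, so your phrase ``contained in $\{2\}$'' is literally correct only for positive $n$; but the domain is still at most one point, so the conclusion (rank $\leq 1$, hence compact) stands, or one can simply treat positive $n$ and pass to adjoints.
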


\section{Translation subspaces of $F_n$}

\subsection{Translation algebras and the Cuntz extension}

In this section we will consider the translation algebras arising from subspaces of the regular 4-valent tree, the Cayley graph of  the free group of rank 2, $F_2$. We will indicate briefly how the arguments carry over to the general case. Let $X$ denote the set of positive words in $F_2$ including the identity. That is $X$ consists of $e$ along with all words in the generators $a$ and $b$.

Consider the partial translations on $X$ defined by $a$ and $b$ (acting by right translation). Then $a$ is globally defined, while the image of $a$ is the set of all positive words ending in $a$. Similarly $b$ is globally defined with image consisting of positive words ending in $b$. The partial translations $a^*$ and $b^*$ are left inverses of $a$ and $b$, respectively. Viewed as an operator on $\ell^2(X)$, the element $a^*$ acts as a bijection from the words ending in $a$ to all words, and acts as zero on $e$ and all words ending with $b$. Similarly for $b$, hence we have the following algebra relations:
$$a^*a=b^*b=1, \quad a^*b=b^*a=0, \quad aa^*+bb^*=1-p_e$$
where $p_e$ denotes the rank 1 projection onto $e$ in $\ell^2(X)$.

A general translation is given by right multiplication by a reduced word $x_1\dots x_k$ with $x_i\in \{a,b,a^{-1},b^{-1}\}$. It is easy to see that as a partial translation $(x_1\dots x_k)$ acts as the composition $y_ky_{n-1}\dots y_1$ where $y_i$ equals $a$ or $b$ respectively if $x_i$ is $a$ or $b$, while $y_i=a^*,b^*$ respectively if $x_i$ is $a^{-1}$ or $b^{-1}$. Note the reversal of the order of composition, due to the fact that we are using right multiplications, e.g.\ the \emph{operator} $ab$ is right multiplication by the \emph{group element} $ba$.

We thus see that the algebra $C^*(X)$ is the $C^*$-algebra generated by $a$ and $b$. (We do not need to explicitly include the identity since $a^*a=1$.)

\bigskip

We now compare $C^*(X)$ with the algebra $C^*(Y)$ of a slightly bigger subset of $F_2$. Let $Y$ denote the set of elements in $F_2$ of the form $a^{n}w(a,b)$, where $n\in\Z$ and $w(a,b)$ is a word in $a$ and $b$. That is, $Y$ consists of reduced words in $a,a^{-1}$ and $b$, where $a^{-1}$ can only occur before the first $b$. Using $Y$ fixes a defect in $X$: each vertex of $X$ has three neighbours with the exception of the identity, which only has two. In $Y$ however \emph{every} vertex has three neighbours, i.e.\ $Y$ is a three regular tree.

We will now consider the algebra $C^*(Y)$. Again the algebra is generated by two elements $a$ and $b$, with $a^*a=b^*b=1$. Let $A$ denote the set of words ending with either $a$ or $a^{-1}$, along with the identity $e$. (Note that the only words ending with $a^{-1}$ are words of the form $a^{-n}$.) Let $B$ denote the set of words ending with $b$. Note that $B$ is the complement of $A$ in $Y$. The partial translation $a$ gives a bijection from $Y$ to $A$ while, $b$ gives a bijection from $Y$ to $B$. The partial translations $a^*$ and $b^*$ are left inverses to $a$ and $b$, hence, as an operator, $aa^*$ is the identity on $A$ and vanishes on $B$, i.e.\ it is the projection of $\ell^2(Y)$ onto $\ell^2(A)$. Conversely $bb^*$  is the projection of $\ell^2(Y)$ onto $\ell^2(B)$.

Thus we conclude that $C^*(Y)$ is a subalgebra of $B(l^2(Y))$ generated by two isometries $a$ and $b$, with the property that
$$aa^*+bb^*=1.$$
This is the defining property of the Cuntz algebra $\O_2$, thus we have $C^*(Y)\cong \O_2$.\footnote{Recall that the Cuntz algebra is constructed concretely as the algebra generated by two such isometries. Cuntz showed that this is the unique algebra with these properties.}

\bigskip
We will now relate $C^*(Y)$ to $C^*(X)$. It will be important to remember at each stage whether a word in $a,b,a^*,b^*$ is to be considered as an operator on $\ell^2(X)$ or on $\ell^2(Y)$. Let $x=x_kx_{k-1}\dots x_1$ be a word in $a,b,a^*,b^*$, considered as an operator on $\ell^2(X)$, and let $y=y_ky_{k-1}\dots y_1$ denote the corresponding operator on $\ell^2(Y)$. We claim that this gives a well defined map from $C^*(X)$ to $C^*(Y)$.

We will say that a word is \emph{quasi-reduced} if it does not contain $a^*a$ or $b^*b$ (since these will act as the identity), and does not contain $a^*b$ or $b^*a$ (since these will act as zero). The quasi-reduced words (as operators on $\ell^2(X)$) span a dense subalgebra of $C^*(X)$.

We claim that the quasi-reduced words are linearly independent. Note that a quasi-reduced word is necessarily of the form $w(a,b)w'(a^*,b^*)$, where $w(a,b)$ (resp.\ $w'(a^*,b^*)$ denotes a positive word in $a,b$ (resp.\ $a^*,b^*$). Say that a word is of type $l$ if $w'$ is a word of length $l$. Note that a quasi-reduced word of type $l$ acts as the zero operator on words in $X$ of length $0,1,\dots l-1$. Thus for a linear combination of quasi-reduced words, the part of type 0 determines the action on $e$. Having removed the type 0 part, the words of type 1 then determine the action on words in $X$ of length 1, etc. Hence considering the action on words of length $0,1,2,\dots$ we deduce that a linear combination which acts as zero, must be zero; that is, the quasi-reduced words are linearly independent.

Now we return to the above map
$$x=x_kx_{k-1}\dots x_1 \in B(\ell^2(X))\mapsto y=y_ky_{k-1}\dots y_1 \in B(\ell^2(Y)).$$
Since the quasi-reduced words are linearly independent, this gives a well defined map from their linear span to $C^*(Y)$. This is a *-algebra homomorphism, and hence contractive, so it extends to a well-defined *-homomorphism from $C^*(X)$ to $C^*(Y)$. Since $a,b$ generate $C^*(Y)$, this homomorphism is surjective.

Clearly the kernel includes $p_e$ since $aa^*+bb^*\mapsto 1$. Hence in fact the kernel includes $\K(\ell^2(X))$, since one can easily construct all matrix elements by pre- and post-composing $p_e$ with translations.

\begin{lem}
Let $x=x_kx_{k-1}\dots x_1$ be a word in $a,b,a^*,b^*$, considered as an operator on $\ell^2(X)$. Consider the corresponding operator $y=y_ky_{k-1}\dots y_1$ on $\ell^2(Y)$, and let $x'$ be the truncation $PyP$ where $P$ is the projection of $\ell^2(Y)$ onto $\ell^2(X)$. Then $x'$ is a compact perturbation of $x$.
\end{lem}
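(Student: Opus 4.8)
The plan is to reduce $x$ to the simplest possible form and then to compare $x$ and $x'=PyP$ column by column on the standard basis $\{\delta_g : g\in X\}$, showing that the two operators agree on all but finitely many basis vectors.

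First I would reduce to the case where $x$ is quasi-reduced. The rewriting relations $a^*a=b^*b=1$ and $a^*b=b^*a=0$ hold verbatim as operator identities on \emph{both} $\ell^2(X)$ and $\ell^2(Y)$, since on each space $a$ and $b$ are isometries with orthogonal ranges. Hence rewriting the word $x$ into quasi-reduced form leaves the operator $x$ on $\ell^2(X)$ unchanged, while performing the identical rewriting on the corresponding word $y$ leaves $y$, and therefore $PyP$, unchanged on $\ell^2(Y)$. It thus suffices to treat a quasi-reduced word, which by the discussion preceding the lemma has the form $x=w\,v^*$ with $w,v$ positive words; write $q=|v|$.

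Next I would make the action explicit. On $\ell^2(X)$ the operator $v^*$ strips the letters of $v$ from $g$ one at a time, returning $0$ as soon as a letter fails to match, or as soon as the root $e$ is reached with letters of $v$ still to be removed; where stripping succeeds, $w$ then appends its letters. The same description holds on $\ell^2(Y)$ with a single difference: at the root $a^*$ does not vanish but continues along the extra negative ray $e,a^{-1},a^{-2},\dots$ of $Y$ (while $b^*$ still vanishes there). I would then observe that if $g$ ends in $v$, say $g=hv$ with $h\in X$, then $v^*\delta_g=\delta_h$ in both spaces, since every intermediate word is a positive word and so lies in $X\subseteq Y$; and $w\delta_h$ is the same basis vector of $\ell^2(X)\subseteq\ell^2(Y)$ in either computation, so $x\delta_g=x'\delta_g$. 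If instead $g$ does not end in $v$ but $|g|\geq q$, then a letter mismatch—occurring identically in $X$ and $Y$—annihilates $\delta_g$ in both computations. In either case $x\delta_g=x'\delta_g$ whenever $|g|\geq q$, because the stripping terminates at a word of length $|g|-q\geq 0$ without ever reaching the root.

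Finally, the stripping process can only reach $e$ with letters of $v$ still remaining when $|g|<q$, and there are only finitely many $g\in X$ with $|g|<q$. On each such $\delta_g$ both $x\delta_g$ and $x'\delta_g$ are either $0$ or a single basis vector, so $x-x'$ is supported on finitely many columns and has finite-dimensional range. Hence $x-x'$ is finite rank, and in particular compact, as asserted. The one genuinely delicate point is isolating precisely where the geometries of $X$ and $Y$ differ—namely the behaviour of $a^*$ at the root, which annihilates $\delta_e$ in $\ell^2(X)$ but carries it onto the negative ray in $\ell^2(Y)$—and checking that this is the \emph{only} source of discrepancy: that $b^*$ behaves identically on both spaces and that the subsequent application of the positive word $w$ cannot manufacture further differences for long words. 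Once this bookkeeping is pinned down, the finiteness of the exceptional set, and hence the conclusion, is immediate.
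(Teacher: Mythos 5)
Your proof is correct, but it takes a genuinely different route from the paper's. The paper does no normalization of the word at all: it observes that for each generator $y_i\in\{a,b,a^*,b^*\}$ the operators $Py_i$ and $y_iP$ on $\ell^2(Y)$ differ on at most one basis vector (only $a$ and $a^*$ fail to commute with $P$, at $\delta_{a^{-1}}$ and $\delta_e$ respectively), hence each commutator is compact; therefore $PyP=Py_k\cdots y_1P$ agrees with $(Py_kP)\cdots(Py_1P)$ modulo compacts, and since $Py_iP=x_i$ the lemma follows in two lines for an arbitrary word, with no case analysis. You instead first rewrite the word into quasi-reduced form $w\,v^*$ --- a legitimate step, since the relations $a^*a=b^*b=1$ and $a^*b=b^*a=0$ hold as operator identities on both $\ell^2(X)$ and $\ell^2(Y)$, so the rewriting changes neither $x$ nor $y$ nor $PyP$ --- and then compare the two operators column by column, correctly isolating the unique source of discrepancy ($a^*\delta_e=0$ on $\ell^2(X)$ versus $a^*\delta_e=\delta_{a^{-1}}$ on $\ell^2(Y)$, with a possible return into $X$ along the ray $a^{-m}$ when $w$ is subsequently applied, as in $x=a(a^*)^2$ acting on $\delta_a$) and showing that $x-x'$ is supported on the finitely many $\delta_g$ with $|g|<|v|$. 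Your argument buys a sharper conclusion: $x-x'$ is exhibited as finite rank with explicit support, and the geometric origin of the perturbation is made visible. The paper's commutator argument buys brevity and robustness: it needs no normal form or linear-independence discussion, and it transfers verbatim to the $F_n/\O_n$ setting, or to any inclusion $X\subset Y$ whose generators almost commute with the truncation projection. One small wording slip: in the case $|g|\geq q$ you say the stripping terminates ``without ever reaching the root,'' but when $|g|=q$ and $g=v$ it reaches $e$ exactly as the letters run out; that case is already covered by your $g=hv$ analysis with $h=e$, so nothing breaks.
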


\begin{proof}
Note that as operators on $\ell^2(Y)$, $Py_i$ and $y_iP$ differ only on a single basis vector. Hence $Py-yP$ is a compact operator. Thus $x'=PyP=Py_ky_{k-1}\dots y_1P$ is a compact perturbation of $(Py_kP)(Py_{k-1}P)\dots (Py_1P)$. It now suffices to note that $Py_iP=x_i$, i.e.\ for $a,b$ etc.\ viewed as translations of $Y$, truncating to $X$ gives the corresponding translation on $X$.
\end{proof}

Since we have a right-inverse, which is also a left-inverse modulo compact operators, it follows that the kernel is precisely the compact operators. We thus produce an extension of $\O_2$
$$0\to\K(\ell^2(X))\to C^*(X)\to C^*(Y)\cong \O_2\to 0.$$
This is an analogue for the Cuntz algebra of the Toeplitz extension.

\bigskip

Note that the argument showing that there is a map from $C^*(X)$ to $C^*(Y)$ in fact shows that $C^*(X)$ has a universal property: If $A$ is any $C^*$-algebra generated by two elements $s,t$ satisfying
$$s^*s=t^*t=1\text{ and } s^*t=t^*s=0$$
then there is a surjection from $C^*(X)$ to $A$, taking $a$ to $s$ and $b$ to $t$. These relations suffice to show that there is a homomorphism from the algebra spanned by the quasi-reduced words to $A$, and this extends by continuity to a surjective *-homomorphism.

Another example of an algebra satisfying these relations is the algebra $E_2$ of Cuntz, \cite{Cuntz1}. By definition this is the subalgebra of $\O_3$ generated by the first two isometries $V_1, V_2$, and since $V_1V_1^*+V_2V_2^*+V_3V_3^*=1$, it follows that $V_1^*V_2=V_2^*V_1=0$. Thus there is a surjection from $C^*(X)$ to $E_2$. In  \cite{Cuntz1}, Cuntz showed that there is an extension
$$0\to J_2\to E_2 \to \O_2 \to 0$$
where $J_2$ is an ideal in $E_2$ isomorphic to the algebra of compact operators. Here the quotient map takes the generators of $E_2$ to the generators of $\O_2$. Thus the quotient map $C^*(X)\to C^*(Y)\cong \O_2$ factors through the map $C^*(X)\to E_2$. The kernel of the latter is thus an ideal in the kernel of the former, which is $\K(\ell^2(X))$. Since this is simple, and the kernel is not the whole of $\K(\ell^2(X))$, we deduce that in fact the surjection from $C^*(X)$ to $E_2$ is in fact an isomorphism. Thus we have proved the following theorem.

\begin{thm}\label{Cuntz} There is a canonical isomorphism between the Cuntz extension
$$0\to J_2\to E_2 \to \O_2 \to 0$$
and the extension
$$0\to \K(\ell^2(X))\to C^*(X) \to C^*(Y) \to 0$$
where $X$ and $Y$ are subsets of $F_2$ as above.
\end{thm}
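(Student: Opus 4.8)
The plan is to identify the two extensions by constructing a commuting ladder between them, using the universal property of $C^*(X)$ established just above. By that universal property, since $E_2 \subset \O_3$ is generated by two isometries $V_1, V_2$ satisfying $V_1^*V_1 = V_2^*V_2 = 1$ and $V_1^*V_2 = V_2^*V_1 = 0$, there is a canonical surjection $\pi\colon C^*(X) \to E_2$ sending $a \mapsto V_1$ and $b \mapsto V_2$. First I would verify that this map is compatible with the quotient maps to $\O_2$: Cuntz's quotient $E_2 \to \O_2$ sends $V_1, V_2$ to the generators $S_1, S_2$ of $\O_2$, while our quotient $C^*(X) \to C^*(Y) \cong \O_2$ sends $a, b$ to the two isometries generating $C^*(Y)$. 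Identifying both copies of $\O_2$ via its own universal generators, the two composites $C^*(X) \to E_2 \to \O_2$ and $C^*(X) \to C^*(Y) \to \O_2$ agree on the generators $a, b$, hence agree as maps. This is the diagram-chase that sets up everything else.

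Next I would argue that $\pi$ restricts to an injection on the kernels. Since the square commutes, $\pi$ carries $\ker\bigl(C^*(X) \to \O_2\bigr) = \K(\ell^2(X))$ into $\ker(E_2 \to \O_2) = J_2$. The crucial structural inputs are that $\K(\ell^2(X))$ is \emph{simple} and that $\pi$ restricted to it is nonzero. Nonvanishing follows because the composite $C^*(X) \to E_2 \to \O_2$ is the quotient map, whose kernel is exactly $\K(\ell^2(X))$; if $\pi$ annihilated all of $\K(\ell^2(X))$ then $\pi$ would factor through $\O_2$, forcing $E_2 \cong \O_2$, which is false since $J_2 \neq 0$. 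Given that $\pi|_{\K}$ is a nonzero $*$-homomorphism out of a simple $C^*$-algebra, its kernel is a proper ideal, hence zero, so $\pi|_{\K(\ell^2(X))}$ is injective with image a nonzero ideal of $\K(\ell^2(X))$ sitting inside $J_2 \cong \K$.

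The main obstacle, and the heart of the argument, is upgrading this to show $\pi$ itself is an isomorphism. The excerpt's own sentence handles this by a simplicity/ideal comparison, and I would make it precise as follows. The kernel of $\pi\colon C^*(X) \to E_2$ is an ideal of $C^*(X)$; since $\pi$ is compatible with the surjections onto $\O_2$ and the induced map on $\O_2$ is the identity, $\ker\pi$ is contained in $\K(\ell^2(X))$. By simplicity of $\K(\ell^2(X))$, either $\ker\pi = 0$ or $\ker\pi = \K(\ell^2(X))$; the latter is excluded exactly by the nonvanishing argument above. Hence $\ker\pi = 0$, and as $\pi$ is already surjective, it is an isomorphism $C^*(X) \cong E_2$.

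Finally I would assemble the isomorphism of extensions. We now have $\pi\colon C^*(X) \xrightarrow{\sim} E_2$ restricting to an isomorphism $\K(\ell^2(X)) \xrightarrow{\sim} J_2$ (both being copies of the compacts, with $\pi$ injective on $\K$ and, by the kernel computation for $E_2 \to \O_2$ together with surjectivity of $\pi$, also surjective onto $J_2$), and inducing the identity on the quotient $\O_2$. This is precisely a commuting diagram
$$\begin{CD} 0 @>>> \K(\ell^2(X)) @>>> C^*(X) @>>> C^*(Y) @>>> 0 \\ @. @VV{\cong}V @VV{\pi}V @VV{\cong}V @. \\ 0 @>>> J_2 @>>> E_2 @>>> \O_2 @>>> 0 \end{CD}$$
with vertical isomorphisms, which is exactly the claimed canonical equivalence of the two extensions. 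The word ``canonical'' is justified because every map used is the one forced by the universal property of $C^*(X)$ together with the universal generators of $\O_2$.
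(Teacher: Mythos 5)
Your proposal is correct and takes essentially the same route as the paper's proof: the universal property of $C^*(X)$ gives the surjection onto $E_2$, compatibility of the two quotient maps to $\O_2$ places its kernel inside $\K(\ell^2(X))$, and simplicity of the compacts together with the fact that the kernel cannot be all of $\K(\ell^2(X))$ forces injectivity, yielding the commuting ladder of extensions. The only difference is one of detail: you spell out why the kernel is not all of $\K(\ell^2(X))$, which the paper leaves implicit --- though your phrase ``forcing $E_2\cong\O_2$'' itself tacitly uses either simplicity of $\O_2$ or the observation that the composite $\O_2\to E_2\to\O_2$ is the identity on generators.
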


\subsection{The algebras $\mathcal O_n$}

Following the construction in the previous section we replace the free group $F_2$ with $F_n$. Again we define the subtree $X$ to be spanned by all positive words, and choosing a generator we extend this to a regular $n+1$-valent tree which we call $Y$. The inclusions of $X$ and $Y$ into the Cayley graph  endow them with partial translation algebras $C^*(X), C^*(Y)$ respectively and we obtain isomorphisms $C^*(X)\cong E_n$ and  $C^*(Y)\cong \mathcal O_n$, with the algebras defined by Cuntz in \cite{Cuntz1}. By the same argument as above the inclusion of $X$ into $Y$ can be shown to induce the extension

$$0\to J_n\to E_n \to \O_n \to 0.$$

\subsection{Embedding $C_\rho^*F_2$ in $\O_2$}

In \cite{Choi} it was shown that there is an injection of the reduced $C^*$-algebra $C_\rho^*F_2$ into $\O_2$. We conclude by constructing such an inclusion explicitly using our identification of $\O_2$ with $C^*(Y)$. To do so we will construct explicit injective quasi-isometries from $F_2$ into $Y$. It is well known that for any $n,m>2$ the $n$-regular tree is quasi-isometric to the $m$-regular tree, and such quasi-isometries are easy to construct. The purpose of the construction given here is that these quasi-isometries are defined in such a way that they behave well with respect to the natural partial translations on $F_2$ and on $Y$. 

An element $g\in F_2$ is uniquely determined by a geodesic segment emanating from the identity element, and this is encoded by a sequence of turns in the Cayley graph of $F_2$.  To formalise what we mean by a turn we consider the extension of $F_2$ by the cyclic group of order $4$ generated by a rotation of the Cayley graph around the base point as follows.

Let $\alpha$ be the automorphism of $F_2$ taking $a$ to $b$ and $b$ to $a^{-1}$. We will use the notation $x^\alpha$ for the image of $x$ under $\alpha$. Let $H$ be the group of automorphisms of $F_2$ generated by $\alpha$, and let $G$ be the semi-direct product $H\ltimes F_2$. This group is generated by $a,b,\alpha$ with the relations $\alpha^4=1$, $\alpha a \alpha^{-1}=b$ and $\alpha b \alpha^{-1}=a^{-1}$. In general we have $\alpha x\alpha^{-1}=x^\alpha$, for $x$ a word in $F_2$.

An element of $F_2$, viewed as a subgroup of $G$, can be written uniquely in the form $a^{-n}\alpha^{i_0}a\alpha^{i_1}a\dots \alpha^{i_{d-1}}a\alpha^{i_d}$, where $n,d\geq 0$, $i_j\in\{-1,0,1\}$ for $j<d$, $i_0+\dots+i_d=0$ mod $4$, and if $n>0$ then $i_0\neq 0$. A reduced word in $F_2$ can be directly transcribed in this form, and we note that the condition that the word is reduced translates directly into the restrictions that $i_0\neq 0$ if $n>0$, and that there are no factors of $\alpha^2$, except possibly for the final term $\alpha^{i_d}$.

We will now consider a couple of examples. The word $aba$ is equal to the normal form word $a^{0}\alpha^{0}a\alpha^{1}a\alpha^{-1}a\alpha^{0}$. Geometrically we can interpret this as saying that starting from an initial heading of East (the $a$ direction) we go forwards ($\alpha^{0}$) then turn left and move forward one unit ($\alpha^{1}$) then turn  right and move forward one unit ($\alpha^{-1}$). Our final heading is East  and this may be read from the terminal $\alpha^0$. The word $a^{-1}b^2$ is equal to $a^{-1}\alpha^{1}a\alpha^{0}a\alpha^{-1}$, which geometrically we interpret as moving backwards by 1 while facing East, turning left and moving one unit ($\alpha^{1}$) then continuing  forwards for one unit ($\alpha^{0}$). Note that at the end we are facing North, which may also be read from the terminal $\alpha^{-1}$. Backwards moves are special in the following sense: they can only occur as initial moves, and they do not change the direction in which we are facing. We will call the direction in which we are facing at any stage the \emph{heading}.

We define a heading function $h\colon F_2 \to \Z/4\Z$ by $h(a^{-n}\alpha^{i_0}a\dots a\alpha^{i_d})=-i_d$. As $i_0+\dots+i_d=0$ mod $4$, we have $h(x)=i_0+\dots+i_{d-1}$. This justifies the observation above that our final heading can be read from the terminal exponent of $\alpha$. Indeed we note that for a word $x$ in $F_2$, if $x=a^{-n}$ for $n\geq 0$ then $h(x)=0$, otherwise $h(x)$ determines the final term in the word: if $x=x_1\dots x_k$ then $x_k=\alpha^{h(x)}a\alpha^{-h(x)}$.

To embed $F_2$ into $Y$, we will need to encode the turns $\alpha^{i_j}$ and the headings $h(x)$ as words in $a,b$. Define $u_0=a,u_1=b^2,u_{-1}=ab$, and define $v_0=a^2,v_1=b^2,v_2=ab$ and $v_3=ba$ (the index is interpreted modulo 4). We can now define an embedding of $F_2$ into $Y$
$$\phi_0\colon F_2\to Y,\quad \phi_0\colon x=a^{-n}\alpha^{i_0}a\alpha^{i_1}a\dots \alpha^{i_d}\mapsto a^{-n}u_{i_0}u_{i_1}\dots u_{i_{d-1}}v_{h(x)}.$$
This expression for $\phi_0(x)$ is not necessarily a reduced word, however at most there is cancellation of one factor of $a$ from $u_{i_0}$ with one factor of $a^{-1}$. One can read off $i_d,i_{d-1},\dots i_0$ from the right of the word, hence one can recover the original word $x$. Thus $\phi_0$ is injective.




We can extend $\phi_0$ to a map from $G$ to $Y$ in the following simple way. A general element of $G$ is of the form $a^{-n}\alpha^{i_0}a\alpha^{i_1}a\dots \alpha^{i_d}$, where $n,d\geq 0$, $i_j\in\{-1,0,1\}$ for $j<d$, and if $n>0$ then $i_0\neq 0$. Note that we now drop the requirement that $i_0+\dots+i_d=0$ mod $4$. We will call this the normal form for an element of $G$. We define 
$$\Phi\colon x=a^{-n}\alpha^{i_0}a\alpha^{i_1}a\dots \alpha^{i_d}\mapsto a^{-n}u_{i_0}u_{i_1}\dots u_{i_{d-1}}v_{h(x)}$$
where as before $h(x)=-i_d$. Again we can recover the word $x$ from its image, thus $\Phi$ is injective. Moreover it is a bijection: for any element $y$ of $Y$ we can read off a corresponding word $x$ such that $\Phi(x)=y$.

The restriction of $\Phi$ to $F_2$ in $G$ is $\phi_0$. Moreover $G$ decomposes as four left cosets of $F_2$, and these are preserved by the right action of $F_2$ on $G$. Using the bijection $\Phi$ we can define a corresponding action of $F_2$ on $Y$. This action is free and has four orbits which are identified with the cosets by $\Phi$. 
Given the set of orbit representatives $I=\{v_0,v_1,v_2,v_3\}$ the space $l^2(Y)$ is thus identified as $l^2(F_2)\otimes l^2(I)$, and the action of $F_2$ on the right gives rise to the the representation $\rho\otimes 1$ where $\rho$ is the right regular representation of $l^2(F_2)$. This the natural embedding of $C^*_\rho(F_2)$ into $C^*_\rho(G)$. Furthermore the bijection $\Phi$ induces an isomorphism $\Phi_*$ between the bounded linear operators on $\ell^2(G)$ and those on $\ell^2(Y)$. 

\begin{thm}\label{CuntzEmbedding}
The image of $C^*_\rho(F_2)\otimes 1$ under the map $\Phi_*$ is contained in $C^*(Y)\cong\O_2$.
\end{thm}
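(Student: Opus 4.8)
The plan is to reduce the statement to its two generators and then make the action of those generators on $\ell^2(Y)$ completely explicit. Since $\Phi$ is a bijection, $\Phi_*$ is a $*$-isomorphism from $B(\ell^2(G))$ onto $B(\ell^2(Y))$, so it suffices to show that the images $S_a = \Phi_*(\rho(a)\otimes 1)$ and $S_b = \Phi_*(\rho(b)\otimes 1)$ of the generators of $C^*_\rho(F_2)\otimes 1$ lie in $C^*(Y)$; the adjoints $S_a^*, S_b^*$ then lie in $C^*(Y)$ automatically, and $S_a, S_b$ generate $\Phi_*(C^*_\rho(F_2)\otimes 1)$. By construction $S_a$ is the unitary permuting the basis of $\ell^2(Y)$ by $\delta_{\Phi(x)}\mapsto \delta_{\Phi(xa)}$, so everything comes down to computing $\Phi(xa)$ from $\Phi(x)$.

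First I would carry out this computation using the normal form $x = a^{-n}\alpha^{i_0}a\cdots\alpha^{i_{d-1}}a\alpha^{i_d}$ and the defining formula for $\Phi$. Right multiplication by $a$ only affects the terminal block, and the effect is governed by the heading $h(x) = -i_d \in \mathbb{Z}/4\mathbb{Z}$: commuting the new letter past $\alpha^{i_d}$ replaces it by one of $a, b, a^{-1}, b^{-1}$, so that for two values of $h$ the normal form lengthens by one block while for the other two it shortens, possibly after a cancellation of an interior $a$. Translating through $\Phi$, this shows that $S_a$ is \emph{suffix local}: there is a bounded $L$ such that $S_a\delta_y$ is determined by the last $L$ letters of $y$, and on each such class of words $S_a$ acts by stripping a bounded suffix and appending a fixed bounded positive word.

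Second, I would assemble $S_a$ from these pieces. Writing $V_h$ for the isometry $\delta_y\mapsto\delta_{y v_h}$ that appends the positive word $v_h\in\{a^2,b^2,ab,ba\}$, the projection $V_h V_h^*$ lies in $C^*(Y)$ and is exactly the projection onto the words of heading $h$; since $\Phi$ is a bijection these four projections are pairwise orthogonal and sum to $1$. On the heading-$0$ subspace the computation is clean: there $S_a$ simply appends an $a$, so $S_a V_0 V_0^* = a\cdot a^2(a^*)^2 = a^3 (a^*)^2 \in C^*(Y)$, the cancellation on the backward ray being handled correctly by the generator $a$ itself. In the remaining headings $S_a$ reads a slightly longer suffix, so one refines $V_h V_h^*$ into finitely many image projections $WW^*$ of positive-word isometries $W$ and checks that on each piece $S_a$ agrees with a fixed word in $a, b, a^*, b^*$. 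Summing the finitely many contributions $W_s Q_s$ over all suffix classes $s$ gives $S_a\in C^*(Y)$, and $S_b$ is treated identically (the order-$4$ rotational symmetry $\alpha$ cuts the number of genuinely distinct cases).

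The main obstacle is exactness at the boundary of $Y$. Because $C^*(Y)\cong\O_2$ is simple and contains no nonzero compact operator, none of the compact-perturbation slack exploited in the Toeplitz and Cuntz-extension arguments above is available here: the identity $S_a = \sum_s W_s Q_s$ must hold on the nose, on every basis vector. The delicate vertices are precisely the backward ray $\{a^{-k} : k\ge 0\}$ and the finitely many short words, where $\Phi(x)$ fails to be reduced and the suffix rewriting degenerates. The point to verify — and the reason the specific conventions in the construction of $\Phi$ matter (the restriction that $a^{-1}$ occur only before the first $b$, together with the explicit choices of $u_j$ and $v_h$) — is that these boundary effects are absorbed exactly into the image projections $WW^*$ rather than surviving as finite-rank errors; concretely one must check that the image of each appending isometry $W$ coincides exactly with the corresponding $\Phi$-class of words, including its backward and short-word members, so that the projections genuinely partition $\ell^2(Y)$. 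Once this bookkeeping is in place the theorem follows, since each summand $W_s Q_s$ is visibly an element of $C^*(Y)$.
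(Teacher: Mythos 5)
Your proposal is correct and takes essentially the same route as the paper: the paper's proof constructs $t_a,t_b\in C^*(Y)$ as explicit six-term sums of the form (appending word)$\cdot$(stripping word) in $a,b,a^*,b^*$ --- precisely your decomposition $\sum_s W_sQ_s$ over bounded suffix classes --- and verifies by the same normal-form/heading computation that these coincide exactly (not merely modulo compacts) with $\Phi_*(\rho(a)\otimes 1)$ and $\Phi_*(\rho(b)\otimes 1)$, which is the boundary-exactness point you rightly flag. The one slip is your case count: for each generator exactly one heading shortens the normal form (heading $2$ for $a$, heading $3$ for $b$) while the other three lengthen it, so only that single heading class needs refining by the preceding $u_i$-block, giving the paper's $3+3=6$ terms per generator rather than a two-and-two split.
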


\begin{proof}  Note that $C^*_\rho(F_2)\otimes 1$ is generated by the elements $\rho(a)\otimes 1, \rho(b)\otimes 1$. We will construct two elements $t_a, t_b\in C^*(Y)$ and show that these are the images under $\Phi_*$ of $\rho(a)\otimes 1, \rho(b)\otimes 1$ respectively. It will thus follow that the image of $C^*_\rho(F_2)\otimes 1$ is contained in $C^*(Y)$.

Recall that right multiplication by $a, b, a^{-1}, b^{-1}$ in $F_2$  induce partial isometries $a, b, a^*, b^*$ on $\ell^2(Y)$. Let $t_a,t_b$  be the partial isometries defined as follows:
\begin{align*}
t_a=&a^3(a^*)^2+ba(a^*)^2b^*+aba^*b^*a^*b^*+b^2(b^*)^2a^*b^*+a^2ba(b^*)^2+a^2b^2b^*a^*,\\
t_b=&b^2a(b^*)^2+aba^*b^*a^*+a^2a^*(b^*)^2a^*+ba(b^*)^3a^*+b^3aa^*b^*+b^4(a^*)^2.
\end{align*}

Viewed as partial translations our operators are chosen so that for any element $y\in Y\subset F_2$, there is a unique term in $t_a$ which is defined at $y$, and likewise for $t_b$. 
Moreover we will see that $t_a$ and $t_b$ are bijections from $Y$ to itself. In terms of the algebra $C^*(Y)$ this means that $t_a$ and $t_b$ are unitaries. 

The following tables, show which term in $t_a,t_b$ acts on any given word in $y$, and how the word is changed.

\[
\left.\begin{array}{|c|c|c|}\hline \hbox{Word ends in} & \hbox{Applicable term for $t_a$} & \hbox{Ending replaced by} \\\hline
a^2=v_0 & a^3(a^*)^2& a^3=u_0v_0\\\hline
a^2b=u_0v_2 & ba(a^*)^2b^*& ab=v_2\\\hline
abab=u_{-1}v_2 & aba^*b^*a^*b^* & ba=v_3\\\hline
b^2ab=u_1v_2 & b^2(b^*)^2a^*b^* & b^2=v_1\\\hline
b^2=v_1 & a^2ba(b^*)^2& aba^2=u_{-1}v_0\\\hline
ba=v_3 & a^2b^2b^*a^*&  b^2a^2=u_1v_0\\\hline
\end{array}\right.\]

\[\left.\begin{array}{|c|c|c|}\hline \hbox{Word ends in} & \hbox{Applicable term for $t_b$} & \hbox{Ending replaced by} \\\hline b^2=v_1 & b^2a(b^*)^2 & ab^2=u_0v_1 \\\hline aba=u_0v_3 & aba^*b^*a^* & ba=v_3 \\\hline ab^2a=u_{-1}v_3 & a^2a^*(b^*)^2a^* & a^2=v_0 \\\hline b^3a=u_1v_3 & ba(b^*)^3a^* & ab=v_2 \\\hline ab=v_2 & b^3aa^*b^* & ab^3=u_{-1}v_1 \\\hline a^2=v_0 & b^4(a^*)^2 & b^4=u_1v_1 \\\hline \end{array}\right.\]



For the benefit of the reader we consider the following example. Let $y=\Phi(\alpha^0 a\alpha^1)=u_0v_3$.  Then $t_ay=u_0u_1v_0=\Phi(\alpha^0 a\alpha^1 a\alpha^0)$. Thus the action of $t_a$ on $y$, is the same as the right action of $a$ on $y$, via the identification $\Phi$ of $Y$ with $G$.  Similarly $t_b y= v_3=\Phi(\alpha)$, and we note that the right action of $b$ on $a\alpha$ produces $a\alpha^2a\alpha^{-1}=\alpha^2 a^{-1} a \alpha^{-1}=\alpha$. Thus the action of $t_b$ on $y$ agrees with the right action of $b$ on $y$.

We now consider the general case. Right multiplication by the element $a$ takes a word of the form $a^{-n}\alpha^{i_0}a\alpha^{i_1}a\dots \alpha^{i_d}$ to $a^{-n}\alpha^{i_0}a\alpha^{i_1}a\dots \alpha^{i_d}a\alpha^{0}$. This is in normal form unless $i_d=2$ in which case we have cancellation as $a\alpha^2a=\alpha^2a^{-1}a=\alpha^2$, thus 
$$a^{-n}\alpha^{i_0}a\alpha^{i_1}a\dots \alpha^{i_{d-1}}a\alpha^{i_d}a\alpha^{0} = a^{-n}\alpha^{i_0}a\alpha^{i_1}a\dots \alpha^{i_{d-1}+2}.$$
In terms of the action of $F_2$ on $Y$ we thus find that multiplication by $a$ has the effect of taking a word of the form $yv_0$ to $yu_0v_0$, taking $yv_1$ to $yu_{-1}v_0$, taking $yv_3$ to  $yu_1v_0$ and taking $yu_iv_2$ to $yv_{2-i}$. Thus the translation $t_a$ is precisely the right action of $a$ on $Y$.

Similarly right multiplication by the element $b$ takes a word of the form \[a^{-n}\alpha^{i_0}a\alpha^{i_1}a\dots \alpha^{i_d}\] to $a^{-n}\alpha^{i_0}a\alpha^{i_1}a\dots \alpha^{i_d+1}a\alpha^{-1}$. This is in normal form unless $i_d+1=2$ in which case we have the cancellation
$$a^{-n}\alpha^{i_0}a\alpha^{i_1}a\dots \alpha^{i_{d-1}}a\alpha^{i_d+1}a\alpha^{-1} = a^{-n}\alpha^{i_0}a\alpha^{i_1}a\dots \alpha^{i_{d-1}+1}.$$
Hence in terms of the action of $F_2$ on $Y$ we find that multiplication by $b$ has the effect of taking a word of the form $yv_0$ to $yu_1v_1$, taking $yv_1$ to  $yu_0v_1$, taking $yv_2$ to $yu_{-1}v_1$ and taking $yu_iv_3$ to $yv_{3-i}$. Again one can check that this is precisely the effect of $t_b$. Thus the translation $t_b$ is the right action of $b$ on $Y$.

We conclude that the subalgebra $C^*(t_a,t_b)$ of $C^*(Y)$ is generated by two unitaries which act on $l^2(Y)\cong l^2(F_2)\otimes l^2(I)$ as $\rho(a)\otimes 1$, $\rho(b)\otimes 1$, where $\rho$ is the right regular representation of $F_2$ on $l^2(F_2)$.  This completes the proof.
\end{proof}


We conclude this section by remarking that the above theorem generalises to show that  $C^*_{\rho_G}(G)$ embeds into $C^*(Y)$ where $\rho_G$ denotes the right regular representation of $G$ on $l^2(G)$. Taking $t_a,t_b$ as in the proof of the theorem, we additionally define
$$t_\alpha = ab(a^*)^2+a^2(b^*)^2+b^2a^*b^*+bab^*a^*.$$
Then $t_\alpha$ takes a word of the form $yv_i$ to $yv_{i-1}$. This is precisely the action of $\alpha$ by right multiplication on $Y$, hence the subalgebra $C^*(t_a,t_b,t_\alpha)$ of $C^*(Y)$ is canonically identified as $C^*_{\rho_G}(G)$. We thus have an embedding of $C^*_{\rho_G}(G)$ into $\O_2= C^*(Y)$.

\end{document}